\newtheorem{theorem}{Theorem}
\theoremstyle{plain}
\newtheorem{corollary}{Corollary}
\newtheorem{definition}{Definition}
\newtheorem{lemma}{Lemma}
\numberwithin{equation}{section}
\begin{document}
\title[On the Modified $q$-Genocchi numbers and polynomials with weight $%
\left( \alpha ,\beta \right) $]{ A note on the\ modified $q$-Genocchi
numbers and polynomials with weight $\left( \alpha ,\beta \right) $ and
their interpolation function at negative integers}
\author{Serkan Arac\i }
\address{University of Gaziantep, Faculty of Science and Arts, Department of
Mathematics, 27310 Gaziantep, TURKEY}
\email{mtsrkn@hotmail.com}
\author{Mehmet A\c{c}\i kg\"{o}z}
\address{University of Gaziantep, Faculty of Science and Arts, Department of
Mathematics, 27310 Gaziantep, TURKEY}
\email{acikgoz@gantep.edu.tr}
\author{Feng Qi}
\address{Department of Mathematics, College of Science, Tianjin Polytechnic
University, Tianjin 300160, China}
\email{qifeng618@gmail.com}
\author{Hassan Jolany}
\address{School of Mathematics, Statistics and Computer Science, University
of Tehran, Iran }
\email{hassan.jolany@khayam.ut.ac.ir}
\date{December 12, 2011}
\subjclass[2000]{Primary 46A15, Secondary 41A65}
\keywords{Genocchi numbers and polynomials, $q$-Genocchi numbers and
polynomials, $q$-Genocchi numbers and polynomials with weight $\alpha $}

\begin{abstract}
The purpose of this paper concerns to establish modified $q$-Genocchi
numbers and polynomials with weight ($\alpha $,$\beta $). In this paper we
investigate special generalized $q$-Genocchi polynomials and we apply the
method of generating function, which are exploited to derive further classes
of $q$-Genocchi polynomials and develop $q$-Genocchi numbers and
polynomials. By using the Laplace-Mellin transformation integral, we define $%
q$-Zeta function with weight ($\alpha $,$\beta $) and by presenting a link
between $q$-Zeta function with weight ($\alpha $,$\beta $) and $q$-Genocchi
numbers with weight ($\alpha $,$\beta $) we obtain an interpolation formula
for the $q$-Genocchi numbers and polynomials with weight ($\alpha $,$\beta $%
). Also we derive distribution formula (Multiplication Theorem) and Witt's
type formula for modified $q$-Genocchi numbers and polynomials with weight ($%
\alpha $,$\beta $) which yields a deeper insight into the effectiveness of
this type of generalizations for $q$-Genocchi numbers and polynomials. Our
new generating function possess a number of interesting properties which we
state in this paper.
\end{abstract}

\maketitle

\section{Introduction, Definitions and Notations}

\bigskip Recently, $q$-calculus has served as a bridge between mathematics
and physics. Therefore, there is a significant increase of activity in the
area of the $q$-calculus due to applications of the $q$-calculus in
mathematics, statistics and physics. The majority of scientists in the world
who use $q$-calculus today are physicists. $q$-Calculus is a generalization
of many subjects, like hypergeometric series, generating functions, complex
analysis, and particle physics. In short, $q$-calculus is quite a popular
subject today. One of Important Branch of $q$-calculus in number theory is $%
q $-type of special generating functions, for instance $q$-Bernoulli
numbers, $q$-Euler numbers, and $q$-Genocchi numbers, here we introduce a
new class of $q$-type generating function. We introduce $q$-Genocchi numbers
with weight $\left( \alpha ,\beta \right) $. When we define a new class of
generating functions like, $q$-Genocchi numbers with weight $\left( \alpha
,\beta \right) $, then we face to with this question that \textquotedblleft
can we define a new $q$-Zeta type function in related of this new class of $%
q $-type generating function?\textquotedblright . We give a positive answer
for our new class of numbers and polynomials. More precisely we show that
our $q$-type generating function is generalization of the Hurwitz Zeta
function. Historically many authors have tried to give $q$-analogues of the
Riemann Zeta function $\zeta \left( s\right) $, and its related functions.
By just following the method of Kaneko et al. [M. Kaneko, N. Kurokawa and M.
Wakayama, A variation of Euler's approach to the Riemann Zeta function,
Kyushu J. Math. 57 (2003), 175--192], who mainly used Euler-Maclaurin
summation formula to present and investigate a $q$-analogue of the Riemann
zeta function $\zeta \left( s\right) $, and gave a good and reasonable
explanation that their $q$-analogue may be a best choice. They also
commented that $q$-analogue of $\zeta \left( s\right) $ can be achieved by
modifying their method. Furthermore it is clear that $q$-Genocchi
polynomials of weight $\left( \alpha ,\beta \right) $ are in a class of
orthogonal polynomials and we know that most such special functions that are
orthogonal are satisfied in multiplication theorem, so in this present paper
we show this property is true for $q$-Genocchi polynomials of weight $\left(
\alpha ,\beta \right) $. In this introductory section, we present the
definitions and notations (and some of the Important properties and
characteristics) of the various special functions, polynomials and numbers,
which are potentially useful in the remainder of the paper.

Assume that $p$ be a fixed odd prime number. Throughout this paper we use
the following notations. By $%
\mathbb{Z}
_{p}$ we denote the ring of $p$-adic rational integers, $%
\mathbb{Q}
$ denotes the field of rational numbers, $%
\mathbb{Q}
_{p}$ denotes the field of $p$-adic rational numbers, and $%
\mathbb{C}
_{p}$ denotes the completion of algebraic closure of $%
\mathbb{Q}
_{p}$. Let $%
\mathbb{N}
$ be the set of natural numbers and $%
\mathbb{Z}
_{+}=%
\mathbb{N}
\cup \left\{ 0\right\} .$ Let $v_{p}$ be the normalized exponential
valuation of $%
\mathbb{C}
_{p}$ with $\left\vert p\right\vert _{p}=p^{-v_{p}\left( p\right) }=p^{-1}.$
When one speaks of $q$-extension, $q$ is considered in many ways such as an
indeterminate, a complex number $q\in 
\mathbb{C}
$ or $p$-adic number $q\in 
\mathbb{C}
_{p}.$ If $q\in 
\mathbb{C}
$ one normally assume that $\left\vert q\right\vert <1.$ If $q\in 
\mathbb{C}
_{p},$ we assume that $\left\vert 1-q\right\vert _{p}<p^{-\frac{1}{p-1}}$ so
that $q^{x}=\exp \left( x\log q\right) $ for $\left\vert x\right\vert
_{p}\leq 1.$ We use the following notation as follows:

\begin{equation*}
\left[ x\right] _{q}=\frac{1-q^{x}}{1-q}\text{, }\left[ x\right] _{-q}=\frac{%
1-\left( -q\right) ^{x}}{1+q}
\end{equation*}

Note that $\lim_{q\rightarrow 1}\left[ x\right] _{q}=x$; cf. [1-24].

For a fixed positive integer $d$ with $\left( d,f\right) =1,$ we set 
\begin{eqnarray*}
X &=&X_{d}=\lim_{\overleftarrow{N}}%
\mathbb{Z}
/dp^{N}%
\mathbb{Z}
, \\
X^{\ast } &=&\underset{\underset{\left( a,p\right) =1}{0<a<dp}}{\cup }a+dp%
\mathbb{Z}
_{p}
\end{eqnarray*}

and%
\begin{equation*}
a+dp^{N}%
\mathbb{Z}
_{p}=\left\{ x\in X\mid x\equiv a\left( \func{mod}dp^{N}\right) \right\} ,
\end{equation*}

where $a\in 
\mathbb{Z}
$ satisfies the condition $0\leq a<dp^{N}.$

By use Koblitz [N. Koblitz, $p$-adic Numbers $p$-adic Analysis and Zeta
Functions, Springer-Verlag, New York Inc, 1977] notations, A $p$-adic
distribution $\mu $ on $X$ is a $%
\mathbb{Q}
_{p}$-linear vector space homomorphism from the $%
\mathbb{Q}
_{p}$-vector space of locally constant functions on $X$ to $%
\mathbb{Q}
_{p}$. If \ $f:X\rightarrow 
\mathbb{Q}
_{p}$ is locally constant, instead of writing $\mu \left( f\right) $ for the
value of $\mu $ at $f$, we usually write $\int f\mu $. Also it is known that
we can write $\mu _{q}$ as follows: 
\begin{equation*}
\mu _{q}\left( x+p^{N}%
\mathbb{Z}
_{p}\right) =\frac{q^{x}}{\left[ p^{N}\right] _{q}}
\end{equation*}

is a distribution on $X$ for $q\in 
\mathbb{C}
_{p}$ with $\left\vert 1-q\right\vert _{p}\leq 1.$ For%
\begin{equation*}
f\in UD\left( 
\mathbb{Z}
_{p}\right) =\left\{ f\mid f:%
\mathbb{Z}
_{p}\rightarrow 
\mathbb{C}
_{p}\text{ is uniformly differentiable function}\right\} ,
\end{equation*}

the fermionic $p$-adic $q$-integral on $%
\mathbb{Z}
_{p}$ is defined by T. Kim as follows:%
\begin{eqnarray}
I_{-q}\left( f\right) &=&\int_{%
\mathbb{Z}
_{p}}f\left( x\right) d\mu _{-q}\left( x\right) =\lim_{N\rightarrow \infty
}\sum_{x=0}^{p^{N}-1}f\left( x\right) \mu _{-q}\left( x+p^{N}%
\mathbb{Z}
_{p}\right)  \label{equation 3} \\
&=&\lim_{N\rightarrow \infty }\frac{1}{\left[ p^{N}\right] _{-q}}%
\sum_{x=0}^{p^{N}-1}\left( -1\right) ^{x}f\left( x\right) q^{x}  \notag
\end{eqnarray}

Let $q\rightarrow 1,$ then we have fermionic integration on $%
\mathbb{Z}
_{p}$ as follows:%
\begin{equation*}
I_{-1}\left( f\right) =\int_{%
\mathbb{Z}
_{p}}f\left( x\right) d\mu _{-1}\left( x\right) =\lim_{N\rightarrow \infty
}\sum_{x=0}^{p^{N}-1}\left( -1\right) ^{x}f\left( x\right) ,
\end{equation*}

So by applying $f\left( x\right) =e^{xt},$ we get%
\begin{equation}
t\int_{%
\mathbb{Z}
_{p}}e^{tx}d\mu _{-1}\left( x\right) =\frac{2t}{e^{t}+1}=\sum_{n=0}^{\infty
}G_{n}\frac{t^{n}}{n!}  \label{equation 4}
\end{equation}

Where $G_{n}$ are Genocchi numbers. By using (\ref{equation 4}), we have%
\begin{equation*}
\int_{%
\mathbb{Z}
_{p}}e^{xt}d\mu _{-1}\left( x\right) =\sum_{n=0}^{\infty }\frac{G_{n+1}}{n+1}%
\frac{t^{n}}{n!}
\end{equation*}

so from above, we obtain 
\begin{equation*}
\sum_{n=0}^{\infty }\left( \int_{%
\mathbb{Z}
_{p}}x^{n}d\mu _{-1}\left( x\right) \right) \frac{t^{n}}{n!}%
=\sum_{n=0}^{\infty }\left( \frac{G_{n+1}}{n+1}\right) \frac{t^{n}}{n!}
\end{equation*}

By comparing coefficients of $\frac{t^{n}}{n!}$ on both sides of the above
equation it is fairly straightforward to deduce,%
\begin{equation*}
\frac{G_{n+1}}{n+1}=\int_{%
\mathbb{Z}
_{p}}x^{n}d\mu _{-1}\left( x\right) .
\end{equation*}

The definition of modified $q$-Euler numbers are given by 
\begin{equation}
\varepsilon _{0,q}=\frac{\left[ 2\right] _{q}}{2},\text{ }\left(
q\varepsilon +1\right) ^{k}-\varepsilon _{k,q}=\left\{ \QATOP{\left[ 2\right]
_{q},\text{ }k=0}{0,\text{ }k>0}\right.  \label{equation 6}
\end{equation}

with usual the convention about replacing $\varepsilon ^{k}$ by $\varepsilon
_{k,q}$ cf. \cite{kim 4},\cite{Ozden}. It was known that the modified $q$%
-euler numbers can be represented by $p$-adic $q$-integral on $%
\mathbb{Z}
_{p}$ as follows:%
\begin{equation*}
\varepsilon _{n,q}=\int_{%
\mathbb{Z}
_{p}}q^{-t}\left[ t\right] _{q}^{n}d\mu _{-q}\left( t\right) .
\end{equation*}

In [3,14,15,17], $q$-Genocchi numbers are defined as follows:

\begin{equation}
G_{0,q}=0,\text{ and }q\left( qG_{q}+1\right) ^{n}+G_{n,q}=\left\{ \QATOPD.
. {\left[ 2\right] _{q},n=1}{0,\text{ \ \ \ }n>1}\right.  \label{equation 2}
\end{equation}

with the usual convention of replacing $\left( G_{q}\right) ^{n}$ by $%
G_{n,q}.$

In \cite{araci 6}, $\left( h,q\right) $-Genocchi numbers are indicated as:%
\begin{equation*}
G_{0,q}^{\left( h\right) }=0,\text{ and }q^{h-2}\left( qG_{q}^{\left(
h\right) }+1\right) ^{n}+G_{n,q}^{\left( h\right) }=\left\{ \QATOP{\left[ 2%
\right] _{q},\text{ }n=1}{0,\text{ \ \ \ }n>1,}\right.
\end{equation*}

with the usual convention about replacing $\left( G_{q}^{\left( h\right)
}\right) ^{n}$ by $G_{n,q}^{\left( h\right) }.$

Recently, for $n\in 
\mathbb{Z}
_{+},$ Araci et al. are considered weighted $q$-Genocchi numbers by 
\begin{equation}
\widetilde{G}_{0,q}^{\left( \alpha \right) }=0,\text{ }q^{1-\alpha }\left( q%
\widetilde{G}_{q}^{\left( \alpha \right) }+1\right) ^{n}+\widetilde{G}%
_{n,q}^{\left( \alpha \right) }=\left\{ \QATOP{\left[ 2\right] _{q},\text{ }%
n=1}{0,\text{ \ \ \ \ \ }n\neq 1,}\right.  \label{equation 5}
\end{equation}

with the usual convention about replacing $\left( \widetilde{G}_{q}\right)
^{n}$ by $\widetilde{G}_{n,q}$ (for more information, see \cite{araci 1})

For $\alpha ,n\in 
\mathbb{Z}
_{+}$ and $h\in 
\mathbb{N}
,$ Araci et al. \cite{araci 2} defined weighted $\left( h,q\right) $%
-Genocchi numbers as follows:%
\begin{equation*}
\widetilde{G}_{n+1,q}^{\left( \alpha ,h\right) }=\int_{%
\mathbb{Z}
_{p}}q^{\left( h-1\right) x}\left[ x\right] _{q^{\alpha }}^{n}d\mu
_{-q}\left( x\right) .
\end{equation*}

\bigskip Taekyun Kim, by using $p$-adic $q$-integral on $%
\mathbb{Z}
_{p}$, introduced a new class of numbers and polynomials. He added a weight
on $q$-Bernoulli numbers and polynomials and defined $q$-Bernoulli numbers
with weight $\alpha $. He is given some interesting properties concerning $q$%
-Bernoulli numbers and polynomials with weight $\alpha $. After, by using $p$%
-adic $q$-integral on $%
\mathbb{Z}
_{p},$ several mathematicians started to study on this new branch of
generating function theory and extended most of the symmetric properties of $%
q$-Bernoulli numbers and polynomials to $q$-Bernoulli numbers and
polynomials with weight $\alpha $ (for more informations, see \cite{araci 1},%
\cite{araci 2},\cite{araci 4},\cite{Araci5},\cite{araci 6},\cite{kim 1},\cite%
{Kim 19},\cite{kim 20},\cite{kim 21},\cite{Kim 22},\cite{Kim 23}). With the
same motivation, we also introduce modified $q$-Genocchi numbers and
polynomials with weight $\left( \alpha ,\beta \right) .$ Also, we give some
interesting properties this type of polynomials. Furthermore, we derive the $%
q$-extensions of zeta type functions with weight $\left( \alpha ,\beta
\right) $ from the Mellin transformation to this generating function which
interpolates the $q$-Genocchi polynomials with weight $\left( \alpha ,\beta
\right) $ at negative integers.

\section{\qquad Modified $q$-Genocchi numbers and polynomials with weight $%
\left( \protect\alpha ,\protect\beta \right) $}

In this section, we derive some interesting properties Modified $q$-Genocchi
numbers and polynomials with weight $\left( \alpha ,\beta \right) $.

\begin{lemma}
For $n\in 
\mathbb{Z}
_{+},$we obtain%
\begin{equation}
I_{-q}^{\left( \beta \right) }\left( q^{-\beta x}f_{n}\right) +\left(
-1\right) ^{n-1}I_{-q}^{\left( \beta \right) }\left( q^{-\beta x}f\right) = 
\left[ 2\right] _{q^{\beta }}\sum_{l=0}^{n-1}\left( -1\right)
^{n-l-1}f\left( l\right) ,  \label{equation 100}
\end{equation}
\end{lemma}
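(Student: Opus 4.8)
The plan is to reduce \eqref{equation 100} to its simplest instance $n=1$ and then bootstrap by induction on $n$, the key input being the basic translation property of the fermionic $p$-adic $q^{\beta}$-integral $I_{-q}^{\left( \beta \right) }\left( g\right) =\int_{\mathbb{Z}_{p}}g\left( x\right) d\mu _{-q^{\beta }}\left( x\right) $. First I would record that, for every $h\in UD\left( \mathbb{Z}_{p}\right) $,
\[
q^{\beta }I_{-q}^{\left( \beta \right) }\left( h_{1}\right) +I_{-q}^{\left( \beta \right) }\left( h\right) =\left[ 2\right] _{q^{\beta }}h\left( 0\right) ,\qquad h_{1}\left( x\right) =h\left( x+1\right) .
\]
This comes straight out of the definition in \eqref{equation 3} with $q$ replaced by $q^{\beta }$: writing $I_{-q}^{\left( \beta \right) }\left( h_{1}\right) =\lim_{N}\left[ p^{N}\right] _{-q^{\beta }}^{-1}\sum_{x=0}^{p^{N}-1}\left( -1\right) ^{x}q^{\beta x}h\left( x+1\right) $, shifting the index $y=x+1$, and factoring out $q^{-\beta }$, one is left with the full sum over $y=0,\dots ,p^{N}-1$ together with the two boundary corrections at $y=0$ and $y=p^{N}$. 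Since $p$ is odd, $\left( -1\right) ^{p^{N}}=-1$, and since $q^{\beta p^{N}}\rightarrow 1$ and $\left[ p^{N}\right] _{-q^{\beta }}^{-1}\rightarrow \tfrac{1}{2}\left[ 2\right] _{q^{\beta }}$ $p$-adically, those two corrections combine to exactly $\left[ 2\right] _{q^{\beta }}h\left( 0\right) $.

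Next I would iterate this. Applying the $n=1$ identity to $h_{n}$, and using $\left( h_{n}\right) _{1}=h_{n+1}$ and $h_{n}\left( 0\right) =h\left( n\right) $, gives $q^{\beta }I_{-q}^{\left( \beta \right) }\left( h_{n+1}\right) =-I_{-q}^{\left( \beta \right) }\left( h_{n}\right) +\left[ 2\right] _{q^{\beta }}h\left( n\right) $; multiplying through by $q^{\beta n}$ and substituting the inductive hypothesis
\[
q^{\beta n}I_{-q}^{\left( \beta \right) }\left( h_{n}\right) +\left( -1\right) ^{n-1}I_{-q}^{\left( \beta \right) }\left( h\right) =\left[ 2\right] _{q^{\beta }}\sum_{l=0}^{n-1}\left( -1\right) ^{n-l-1}q^{\beta l}h\left( l\right)
\]
collapses the telescoping sum and attaches the exponent $q^{\beta n}$ to the new boundary value $h\left( n\right) $, yielding the same formula with $n$ replaced by $n+1$; the case $n=1$ is the translation property above. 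This establishes the weighted shift identity for an arbitrary $h$.

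Finally I would specialize $h\left( x\right) =q^{-\beta x}f\left( x\right) $. Then $h_{n}\left( x\right) =q^{-\beta \left( x+n\right) }f\left( x+n\right) =q^{-\beta n}\,q^{-\beta x}f_{n}\left( x\right) $, so $q^{\beta n}I_{-q}^{\left( \beta \right) }\left( h_{n}\right) =I_{-q}^{\left( \beta \right) }\left( q^{-\beta x}f_{n}\right) $ and $I_{-q}^{\left( \beta \right) }\left( h\right) =I_{-q}^{\left( \beta \right) }\left( q^{-\beta x}f\right) $, while $q^{\beta l}h\left( l\right) =f\left( l\right) $ strips every weight out of the right-hand sum. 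Substituting these into the identity of the previous paragraph produces \eqref{equation 100} verbatim.

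The one place I expect genuine care to be needed is the first step: the bookkeeping of the two boundary terms in the limit $N\rightarrow \infty $, where it is precisely the oddness of $p$ (making $\left( -1\right) ^{p^{N}}=-1$) together with $\left[ p^{N}\right] _{-q^{\beta }}^{-1}\rightarrow \tfrac{1}{2}\left[ 2\right] _{q^{\beta }}$ that turns the naive $2h\left( 0\right) $ into $\left[ 2\right] _{q^{\beta }}h\left( 0\right) $. After that, both the induction and the substitution $h=q^{-\beta x}f$ are purely formal.
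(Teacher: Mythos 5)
Your proof is correct and takes essentially the same route as the paper: the one-step shift identity for the fermionic $q^{\beta }$-integral derived directly from the definition (index shift plus the two boundary terms, using that $p$ is odd and $q^{\beta p^{N}}\rightarrow 1$), followed by iteration/induction on $n$. The only cosmetic difference is that you prove the untwisted identity $q^{\beta }I_{-q}^{\left( \beta \right) }\left( h_{1}\right) +I_{-q}^{\left( \beta \right) }\left( h\right) =\left[ 2\right] _{q^{\beta }}h\left( 0\right) $ for general $h$ and only specialize $h\left( x\right) =q^{-\beta x}f\left( x\right) $ at the end, whereas the paper inserts the twist $q^{-\beta x}$ from the start so the weights cancel inside the Riemann sums.
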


\begin{proof}
Let be $f_{n}\left( x\right) =f\left( x+n\right) $ and $I_{-q}^{\left( \beta
\right) }\left( f\right) =\int_{%
\mathbb{Z}
_{p}}f\left( x\right) d\mu _{-q^{\beta }}\left( x\right) $ \ by the (\ref%
{equation 3}), we easily get 
\begin{eqnarray}
-I_{-q}^{\left( \beta \right) }\left( q^{-\beta x}f_{1}\right)
&=&\lim_{N\rightarrow \infty }\frac{1}{\left[ p^{N}\right] _{-q^{\beta }}}%
\sum_{x=0}^{p^{N}-1}f\left( x+1\right) \left( -1\right) ^{x}  \notag \\
&=&\lim_{N\rightarrow \infty }\frac{1}{\left[ p^{N}\right] _{-q^{\beta }}}%
\sum_{x=0}^{p^{N}-1}f\left( x\right) \left( -1\right) ^{x}-\left[ 2\right]
_{q^{\beta }}\lim_{N\rightarrow \infty }\frac{f\left( p^{N}\right) +f\left(
0\right) }{1+q^{\beta p^{N}}}  \notag \\
&=&I_{-q}^{\left( \beta \right) }\left( q^{-\beta x}f\right) -\left[ 2\right]
_{q^{\beta }}f\left( 0\right)  \label{equation 34}
\end{eqnarray}%
and%
\begin{eqnarray*}
I_{-q}^{\left( \beta \right) }\left( q^{-\beta x}f_{2}\right) &=&\int_{%
\mathbb{Z}
_{p}}q^{-\beta x}f\left( x+2\right) d\mu _{-q^{\beta }}\left( x\right)
=\lim_{N\rightarrow \infty }\frac{1}{\left[ p^{N}\right] _{-q^{\beta }}}%
\sum_{x=0}^{p^{N}-1}f\left( x+2\right) \left( -1\right) ^{x} \\
&=&I_{-q}^{\left( \beta \right) }\left( q^{-\beta x}f\right) +\left[ 2\right]
_{q^{\beta }}\lim_{N\rightarrow \infty }\frac{-f\left( 0\right) +f\left(
1\right) -f\left( p^{N}\right) +f\left( p^{N}+1\right) }{1+q^{\beta p^{N}}}
\\
&=&I_{-q}^{\left( \beta \right) }\left( q^{-\beta x}f\right) +\left[ 2\right]
_{q^{\beta }}\left( f\left( 1\right) -f\left( 0\right) \right)
\end{eqnarray*}%
Thus, we have%
\begin{equation*}
I_{-q}^{\left( \beta \right) }\left( q^{-\beta x}f_{2}\right)
-I_{-q}^{\left( \beta \right) }\left( q^{-\beta x}f\right) =\left[ 2\right]
_{q^{\beta }}\sum_{l=0}^{1}\left( -1\right) ^{1-l}f\left( l\right)
\end{equation*}

By continuing this process, we arrive at the desired result.
\end{proof}

\begin{definition}
Let $\alpha ,n,\beta \in 
\mathbb{Z}
_{+}.$ We define modified $q$-Genocchi numbers with weight $\left( \alpha
,\beta \right) $ as follows:%
\begin{equation}
\frac{g_{n+1,q}^{\left( \alpha ,\beta \right) }}{n+1}=\left[ 2\right]
_{q^{\beta }}\sum_{m=0}^{\infty }\left( -1\right) ^{m}\left[ m\right]
_{q^{\alpha }}^{n}  \label{equation 101}
\end{equation}
\end{definition}

\begin{theorem}
For $\alpha ,n,\beta \in 
\mathbb{Z}
_{+},$ we get%
\begin{equation}
\frac{g_{n+1,q}^{\left( \alpha ,\beta \right) }}{n+1}=\frac{\left[ 2\right]
_{q^{\beta }}}{\left( 1-q^{\alpha }\right) ^{n}}\sum_{l=0}^{n}\binom{n}{l}%
\left( -1\right) ^{l}\frac{1}{1+q^{\alpha l}}  \label{equation 102}
\end{equation}
\end{theorem}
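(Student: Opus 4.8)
The plan is to compute the series in Definition~\ref{equation 101} in closed form by expanding the weighted $q$-integers. Recall that
\[
[m]_{q^{\alpha}}=\frac{1-q^{\alpha m}}{1-q^{\alpha}}.
\]
Raising this to the power $n$ and applying the ordinary binomial theorem gives
\[
[m]_{q^{\alpha}}^{n}=\frac{1}{(1-q^{\alpha})^{n}}\sum_{l=0}^{n}\binom{n}{l}(-1)^{l}q^{\alpha m l}.
\]

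First I would substitute this identity into the defining relation (\ref{equation 101}), namely
\[
\frac{g_{n+1,q}^{(\alpha,\beta)}}{n+1}=[2]_{q^{\beta}}\sum_{m=0}^{\infty}(-1)^{m}[m]_{q^{\alpha}}^{n},
\]
and then interchange the finite sum over $l$ with the infinite sum over $m$. Since $|q|<1$ (with the evident $p$-adic counterpart in the $p$-adic setting), every series appearing is absolutely convergent, so the interchange is legitimate and yields
\[
\frac{g_{n+1,q}^{(\alpha,\beta)}}{n+1}=\frac{[2]_{q^{\beta}}}{(1-q^{\alpha})^{n}}\sum_{l=0}^{n}\binom{n}{l}(-1)^{l}\sum_{m=0}^{\infty}(-1)^{m}q^{\alpha l m}.
\]

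Next I would evaluate the inner series as a geometric series,
\[
\sum_{m=0}^{\infty}(-1)^{m}q^{\alpha l m}=\sum_{m=0}^{\infty}\bigl(-q^{\alpha l}\bigr)^{m}=\frac{1}{1+q^{\alpha l}},
\]
which converges because $|q^{\alpha l}|\le|q|<1$. Inserting this back gives exactly (\ref{equation 102}), completing the proof. The entire computation is routine; the only point deserving a word of justification is the convergence of the geometric series together with the legitimacy of swapping the two summations, both of which follow immediately from the standing assumption $|q|<1$. I do not anticipate any genuine obstacle.
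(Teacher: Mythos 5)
Your proposal is correct and follows essentially the same route as the paper: expand $\left[ m\right] _{q^{\alpha }}^{n}$ by the binomial theorem, interchange the finite sum over $l$ with the alternating sum over $m$, and evaluate the resulting geometric series as $\frac{1}{1+q^{\alpha l}}$. The only difference is that you explicitly justify the convergence and the interchange of summations, which the paper leaves implicit.
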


\begin{proof}
By (\ref{equation 101}), we develop as follows: 
\begin{eqnarray*}
&&\frac{g_{n+1,q}^{\left( \alpha ,\beta \right) }}{n+1} \\
&=&\frac{\left[ 2\right] _{q^{\beta }}}{\left( 1-q^{\alpha }\right) ^{n}}%
\sum_{m=0}^{\infty }\left( -1\right) ^{m}\left( 1-q^{m\alpha }\right) ^{n} \\
&=&\frac{\left[ 2\right] _{q^{\beta }}}{\left( 1-q^{\alpha }\right) ^{n}}%
\sum_{m=0}^{\infty }\left( -1\right) ^{m}\sum_{l=0}^{n}\binom{n}{l}\left(
-1\right) ^{l}\left( q^{m\alpha }\right) ^{l} \\
&=&\frac{\left[ 2\right] _{q^{\beta }}}{\left( 1-q^{\alpha }\right) ^{n}}%
\sum_{l=0}^{n}\binom{n}{l}\left( -1\right) ^{l}\sum_{m=0}^{\infty }\left(
-1\right) ^{m}q^{m\alpha l} \\
&=&\frac{\left[ 2\right] _{q^{\beta }}}{\left( 1-q^{\alpha }\right) ^{n}}%
\sum_{l=0}^{n}\binom{n}{l}\left( -1\right) ^{l}\frac{1}{1+q^{\alpha l}}.
\end{eqnarray*}

Thus, we complete the proof of Theorem.
\end{proof}

By the following Theorem, we get Witt's type formula of this type
polynomials.

\begin{theorem}
For $\beta ,\alpha ,n\in 
\mathbb{Z}
_{+},$ we get 
\begin{equation}
\frac{g_{n+1,q}^{\left( \alpha ,\beta \right) }}{n+1}=\int_{%
\mathbb{Z}
_{p}}q^{-\beta x}\left[ x\right] _{q^{\alpha }}^{n}d\mu _{-q^{\beta }}\left(
x\right) .  \label{equation 104}
\end{equation}
\end{theorem}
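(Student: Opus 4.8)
The plan is to evaluate the $p$-adic $q$-integral on the right-hand side of \eqref{equation 104} directly from the definition \eqref{equation 3} and match it with the closed form \eqref{equation 102} already established. First I would write
\[
\int_{\mathbb{Z}_{p}}q^{-\beta x}\left[ x\right] _{q^{\alpha }}^{n}d\mu _{-q^{\beta }}\left( x\right) =\lim_{N\rightarrow \infty }\frac{1}{\left[ p^{N}\right] _{-q^{\beta }}}\sum_{x=0}^{p^{N}-1}\left( -1\right) ^{x}q^{\beta x}\,q^{-\beta x}\left[ x\right] _{q^{\alpha }}^{n},
\]
so the factors $q^{\beta x}$ and $q^{-\beta x}$ cancel and the integral collapses to $\lim_{N\to\infty}\frac{1}{[p^{N}]_{-q^{\beta}}}\sum_{x=0}^{p^{N}-1}(-1)^{x}[x]_{q^{\alpha}}^{n}$. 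The key observation is that, because $[p^N]_{-q^\beta}\to$ a unit and the alternating-sign structure survives the limit, this limit is exactly $[2]_{q^{\beta}}\sum_{m=0}^{\infty}(-1)^{m}[m]_{q^{\alpha}}^{n}$, which is the defining expression \eqref{equation 101} for $g_{n+1,q}^{(\alpha,\beta)}/(n+1)$. Thus the statement reduces to showing that the fermionic integral in question reproduces that infinite alternating series.

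Concretely, I would expand $[x]_{q^{\alpha}}^{n}=\frac{1}{(1-q^{\alpha})^{n}}\sum_{l=0}^{n}\binom{n}{l}(-1)^{l}q^{\alpha l x}$ by the binomial theorem, exactly as in the proof of the previous theorem. Substituting and interchanging the finite $l$-sum with the $p$-adic integral gives
\[
\int_{\mathbb{Z}_{p}}q^{-\beta x}\left[ x\right] _{q^{\alpha }}^{n}d\mu _{-q^{\beta }}\left( x\right) =\frac{1}{(1-q^{\alpha })^{n}}\sum_{l=0}^{n}\binom{n}{l}(-1)^{l}\int_{\mathbb{Z}_{p}}q^{\alpha l x}\,d\mu _{-q^{\beta }}\left( x\right),
\]
where I have used $\int q^{-\beta x}q^{\alpha l x}\,d\mu_{-q^\beta} = \int (-1)^x q^{\beta x} q^{-\beta x} q^{\alpha l x}/[p^N]_{-q^\beta}$ in the limit. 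The inner integral is the standard fermionic moment $\int_{\mathbb{Z}_{p}}e^{\alpha l x\log q}d\mu_{-q^{\beta}}(x)=\frac{[2]_{q^{\beta}}}{q^{\alpha l}+1}$, obtained from the elementary geometric-series evaluation $\lim_{N\to\infty}\frac{1}{[p^N]_{-q^\beta}}\sum_{x=0}^{p^N-1}(-1)^x q^{\alpha l x}=\frac{1+q^{\beta}}{q^{\alpha l}+1}$ (equivalently a special case of Lemma~\ref{equation 100} applied to $f(x)=q^{\alpha l x}$, or a direct computation). Plugging this back yields precisely
\[
\int_{\mathbb{Z}_{p}}q^{-\beta x}\left[ x\right] _{q^{\alpha }}^{n}d\mu _{-q^{\beta }}\left( x\right) =\frac{[2]_{q^{\beta }}}{(1-q^{\alpha })^{n}}\sum_{l=0}^{n}\binom{n}{l}(-1)^{l}\frac{1}{1+q^{\alpha l}},
\]
which is the right-hand side of \eqref{equation 102}, and hence equals $g_{n+1,q}^{(\alpha,\beta)}/(n+1)$ by the previous theorem.

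The only real technical point — the "hard part" — is justifying the evaluation of the basic fermionic integral $\int_{\mathbb{Z}_{p}}q^{\alpha l x}d\mu_{-q^{\beta}}(x)=\frac{[2]_{q^{\beta}}}{q^{\alpha l}+1}$, i.e. controlling the limit $N\to\infty$ in $\frac{1}{[p^N]_{-q^\beta}}\sum_{x=0}^{p^N-1}(-1)^x q^{\alpha l x}$: one must split the sum over $x=2y$ and $x=2y+1$, sum the two geometric progressions in $q^{2\alpha l}$, and observe that $[p^N]_{-q^\beta}=\frac{1-q^{\beta p^N}}{1+q^{\beta}}\to\frac{1}{1+q^\beta}$ while $q^{\alpha l p^N}\to 1$ in the $p$-adic topology (using $|1-q|_p<p^{-1/(p-1)}$), so the boundary terms vanish. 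This is entirely parallel to the computation already carried out in the proof of Lemma~\ref{equation 100} for $f_1$ and $f_2$, so I would simply cite that lemma (or note it is a routine fermionic integral) rather than redo it. Once this is in hand, the theorem follows by combining the binomial expansion with Theorem~\ref{equation 102}, with no further obstacle.
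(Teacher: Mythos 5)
Your proposal is correct and follows essentially the same route as the paper's proof: expand $\left[ x\right]_{q^{\alpha}}^{n}$ binomially, evaluate the basic fermionic integral $\lim_{N\rightarrow\infty}\frac{1}{\left[ p^{N}\right]_{-q^{\beta}}}\sum_{x=0}^{p^{N}-1}\left(-1\right)^{x}q^{\alpha l x}=\frac{\left[ 2\right]_{q^{\beta}}}{1+q^{\alpha l}}$, and match the result with the closed form \eqref{equation 102} of the preceding theorem. The only blemishes are small slips in your parenthetical limit computation (for odd $p^{N}$ one has $\left[ p^{N}\right]_{-q^{\beta}}=\frac{1+q^{\beta p^{N}}}{1+q^{\beta}}\rightarrow\frac{2}{1+q^{\beta}}$, not $\frac{1-q^{\beta p^{N}}}{1+q^{\beta}}\rightarrow\frac{1}{1+q^{\beta}}$, and the identification of the $p$-adic limit with the complex series $\left[ 2\right]_{q^{\beta}}\sum_{m\geq 0}\left(-1\right)^{m}\left[ m\right]_{q^{\alpha}}^{n}$ should go through the rational closed form rather than be asserted directly), but these cancel out and do not affect your correct final evaluation.
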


\begin{proof}
By using $p$-adic $q$-integral on $%
\mathbb{Z}
_{p}$, namely, replace $f(x)$ by $q^{-\beta x}\left[ x\right] _{q^{\alpha
}}^{n}$ and $\mu _{-q}\left( x+p^{N}%
\mathbb{Z}
_{p}\right) $ by $\mu _{-q^{\beta }}\left( x+p^{N}%
\mathbb{Z}
_{p}\right) $ into (\ref{equation 3}), we get%
\begin{eqnarray}
\int_{%
\mathbb{Z}
_{p}}q^{-\beta x}\left[ x\right] _{q^{\alpha }}^{n}d\mu _{-q^{\beta }}\left(
x\right) &=&\frac{1}{\left( 1-q^{\alpha }\right) ^{n}}\sum_{l=0}^{n}\binom{n%
}{l}\left( -1\right) ^{l}\int_{%
\mathbb{Z}
_{p}}q^{\alpha lx-\beta x}d\mu _{-q^{\beta }}\left( x\right)  \notag \\
&=&\frac{1}{\left( 1-q^{\alpha }\right) ^{n}}\sum_{l=0}^{n}\binom{n}{l}%
\left( -1\right) ^{l}\lim_{N\rightarrow \infty }\frac{1}{\left[ p^{N}\right]
_{-q^{\beta }}}\sum_{x=0}^{p^{N}-1}\left( -q^{\alpha l}\right) ^{x}  \notag
\\
&=&\frac{1}{\left( 1-q^{\alpha }\right) ^{n}}\sum_{l=0}^{n}\binom{n}{l}%
\left( -1\right) ^{l}\frac{\left[ 2\right] _{q^{\beta }}}{1+q^{\alpha l}}%
\lim_{N\rightarrow \infty }\frac{1+\left( q^{\alpha l}\right) ^{p^{N}}}{%
1+q^{\beta p^{N}}}  \label{equation 103} \\
&=&\frac{\left[ 2\right] _{q^{\beta }}}{\left( 1-q^{\alpha }\right) ^{n}}%
\sum_{l=0}^{n}\binom{n}{l}\left( -1\right) ^{l}\frac{1}{1+q^{\alpha l}} 
\notag
\end{eqnarray}
Use of (\ref{equation 102}) and (\ref{equation 103}), we arrive at the
desired result.
\end{proof}

The Witt's type formula of modified $q$-Genocchi numbers with weight $\left(
\alpha ,\beta \right) $ asserted by Theorem 2, do aid in translating the
various properties and results involving $q$-Genocchi numbers with weight $%
\left( \alpha ,\beta \right) $ which we state some of them in this section.
We put $\alpha \rightarrow 1$ and $\beta \rightarrow 1$ into (\ref{equation
104}), we readily see $\frac{g_{n+1,q}^{\left( 1,1\right) }}{n+1}%
=\varepsilon _{n,q}.$

\begin{corollary}
Let $C_{q}^{\left( \alpha ,\beta \right) }\left( t\right)
=\sum_{n=0}^{\infty }$ $g_{n,q}^{\left( \alpha ,\beta \right) }\frac{t^{n}}{%
n!}.$ Then we have%
\begin{equation*}
C_{q}^{\left( \alpha ,\beta \right) }\left( t\right) =\left[ 2\right]
_{q^{\beta }}t\sum_{m=0}^{\infty }\left( -1\right) ^{m}e^{t\left[ m\right]
_{q^{\alpha }}}.
\end{equation*}
\end{corollary}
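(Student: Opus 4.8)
The plan is to derive the generating function $C_{q}^{\left( \alpha ,\beta \right) }\left( t\right)$ directly from Definition 1, which expresses the weighted $q$-Genocchi numbers in terms of an alternating series over $\left[ m\right] _{q^{\alpha }}^{n}$. First I would start from the power series $C_{q}^{\left( \alpha ,\beta \right) }\left( t\right) =\sum_{n=0}^{\infty } g_{n,q}^{\left( \alpha ,\beta \right) }\frac{t^{n}}{n!}$ and note that the $n=0$ term vanishes (since $g_{0,q}^{\left( \alpha ,\beta \right) }=0$, which follows from the pattern of the definition, consistent with the unweighted case $G_{0,q}=0$). Hence the sum effectively runs from $n=1$, and reindexing with $n+1$ in place of $n$ gives $C_{q}^{\left( \alpha ,\beta \right) }\left( t\right) =\sum_{n=0}^{\infty } g_{n+1,q}^{\left( \alpha ,\beta \right) }\frac{t^{n+1}}{(n+1)!}=t\sum_{n=0}^{\infty }\frac{g_{n+1,q}^{\left( \alpha ,\beta \right) }}{n+1}\frac{t^{n}}{n!}$.

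Next I would substitute the closed form from Definition 1 (equation (\ref{equation 101})), namely $\frac{g_{n+1,q}^{\left( \alpha ,\beta \right) }}{n+1}=\left[ 2\right] _{q^{\beta }}\sum_{m=0}^{\infty }\left( -1\right) ^{m}\left[ m\right] _{q^{\alpha }}^{n}$, into the last expression. This yields
\begin{equation*}
C_{q}^{\left( \alpha ,\beta \right) }\left( t\right) =\left[ 2\right] _{q^{\beta }}t\sum_{n=0}^{\infty }\left( \sum_{m=0}^{\infty }\left( -1\right) ^{m}\left[ m\right] _{q^{\alpha }}^{n}\right) \frac{t^{n}}{n!}.
\end{equation*}
Interchanging the order of the two summations (justified for $q\in \mathbb{C}$ with $\left\vert q\right\vert <1$, or in the $p$-adic setting via the usual convergence of $\sum_m (-1)^m [m]_{q^{\alpha}}^n$), the inner sum over $n$ becomes $\sum_{n=0}^{\infty }\frac{\left( t\left[ m\right] _{q^{\alpha }}\right) ^{n}}{n!}=e^{t\left[ m\right] _{q^{\alpha }}}$, and thus
\begin{equation*}
C_{q}^{\left( \alpha ,\beta \right) }\left( t\right) =\left[ 2\right] _{q^{\beta }}t\sum_{m=0}^{\infty }\left( -1\right) ^{m}e^{t\left[ m\right] _{q^{\alpha }}},
\end{equation*}
which is exactly the claimed identity.

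The only genuine subtlety — the ``hard part'' — is justifying the interchange of the double summation and, relatedly, confirming that $g_{0,q}^{\left( \alpha ,\beta \right) }=0$ so that the factor of $t$ pulls out cleanly; both are routine given the convergence assumptions on $q$ already in force in the paper (for $q \in \mathbb{C}$, $\left\vert q\right\vert < 1$ makes $\left\vert [m]_{q^{\alpha}}\right\vert$ bounded, so Fubini/Tonelli applies termwise for fixed $t$ in a neighborhood of $0$). Everything else is formal manipulation of power series. Since the statement is labeled a Corollary, I expect the author's proof to be essentially this short computation, possibly citing Definition 1 and Theorem 2 and omitting the convergence bookkeeping.
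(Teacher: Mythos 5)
Your argument is correct and takes a somewhat different route from the paper's. You work purely from Definition 1: you drop the $n=0$ term (the paper does indeed set $g_{0,q}^{(\alpha,\beta)}=0$, see its Theorem 5, so pulling out the factor $t$ is legitimate), reindex, substitute $\frac{g_{n+1,q}^{(\alpha,\beta)}}{n+1}=[2]_{q^{\beta}}\sum_{m\geq 0}(-1)^{m}[m]_{q^{\alpha}}^{n}$, and interchange the sums to assemble $e^{t[m]_{q^{\alpha}}}$. The paper instead routes the computation through the fermionic $p$-adic integral: it first records the identity $\int_{\mathbb{Z}_{p}}q^{-\beta x}e^{t[x]_{q^{\alpha}}}\,d\mu_{-q^{\beta}}(x)=[2]_{q^{\beta}}t\sum_{m\geq 0}(-1)^{m}e^{t[m]_{q^{\alpha}}}$ (its equation (3.5)-type relation labeled (105)), which rests on the Witt-type formula of Theorem 2 combined with Definition 1, and then reads off the exponential generating function from that integral. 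The computational core is the same coefficientwise substitution, so your version is the more elementary one (it never needs Theorem 2 or the integral machinery), while the paper's keeps the corollary embedded in its $p$-adic-integral framework. One caveat: your convergence justification overstates what holds. For $q\in\mathbb{C}$ with $|q|<1$ one has $[m]_{q^{\alpha}}\rightarrow 1/(1-q^{\alpha})\neq 0$, so $\sum_{m}(-1)^{m}[m]_{q^{\alpha}}^{n}$ is not absolutely convergent (its terms do not even tend to zero), and Fubini/Tonelli does not apply as you state; the interchange has to be understood in the formal or Abel-summed sense in which the paper itself operates (compare Theorem 1, where the inner series is reduced to $\sum_{m}(-1)^{m}q^{\alpha l m}$ and assigned the value $1/(1+q^{\alpha l})$, including at $l=0$). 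Since the paper's own proof is no more rigorous on this point, this does not invalidate your approach, but the boundedness-of-$[m]_{q^{\alpha}}$ argument as written would not close that gap.
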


\begin{proof}
From (\ref{equation 101}) we easily get,%
\begin{equation}
\int_{%
\mathbb{Z}
_{p}}q^{-\beta x}e^{t\left[ x\right] _{q^{\alpha }}}d\mu _{-q^{\beta
}}\left( x\right) =\left[ 2\right] _{q^{\beta }}t\sum_{m=0}^{\infty }\left(
-1\right) ^{m}e^{t\left[ m\right] _{q^{\alpha }}}  \label{equation 105}
\end{equation}%
By expression (\ref{equation 105}), we have%
\begin{equation*}
\sum_{n=0}^{\infty }g_{n,q}^{\left( \alpha ,\beta \right) }\frac{t^{n}}{n!}=%
\left[ 2\right] _{q^{\beta }}t\sum_{m=0}^{\infty }\left( -1\right) ^{m}e^{t%
\left[ m\right] _{q^{\alpha }}}
\end{equation*}

Thus, we complete the proof of Theorem.
\end{proof}

Now, we consider the modified $q$-Genocchi polynomials polynomials with
weight $\alpha $\ as follows:%
\begin{equation}
\frac{g_{n+1,q}^{\left( \alpha ,\beta \right) }(x)}{n+1}=\int_{%
\mathbb{Z}
_{p}}q^{-\beta t}\left[ x+t\right] _{q^{\alpha }}^{n}d\mu _{-q^{\beta
}}\left( t\right) ,\text{ \ }n\in 
\mathbb{N}
\text{ and }\alpha \in 
\mathbb{Z}
_{+}  \label{equation 106}
\end{equation}

From expression (\ref{equation 106}), we see readily%
\begin{eqnarray}
\frac{g_{n+1,q}^{\left( \alpha ,\beta \right) }(x)}{n+1} &=&\frac{\left[ 2%
\right] _{q^{\beta }}}{\left( 1-q^{\alpha }\right) ^{n}}\sum_{l=0}^{n}\binom{%
n}{l}\left( -1\right) ^{l}q^{\alpha lx}\frac{1}{1+q^{\alpha l}}  \notag \\
&=&\left[ 2\right] _{q^{\beta }}\sum_{m=0}^{\infty }\left( -1\right) ^{m}%
\left[ m+x\right] _{q^{\alpha }}^{n}  \label{equation 107}
\end{eqnarray}%
Let $C_{q}^{\left( \alpha ,\beta \right) }\left( t,x\right)
=\sum_{n=0}^{\infty }g_{n,q}^{\left( \alpha ,\beta \right) }(x)\frac{t^{n}}{%
n!}.$ Then we have%
\begin{eqnarray}
C_{q}^{\left( \alpha ,\beta \right) }\left( t,x\right) &=&\left[ 2\right]
_{q^{\beta }}t\sum_{m=0}^{\infty }\left( -1\right) ^{m}e^{t\left[ m+x\right]
_{q^{\alpha }}}  \notag \\
&=&\sum_{n=0}^{\infty }g_{n,q}^{\left( \alpha ,\beta \right) }\left(
x\right) \frac{t^{n}}{n!}.  \label{equation 108}
\end{eqnarray}

By Lemma 1, we get the following Theorem:

\begin{theorem}
For $m\in 
\mathbb{N}
,$and $\alpha ,\beta ,n\in 
\mathbb{Z}
_{+},$ we get%
\begin{equation*}
\frac{g_{m+1,q}^{\left( \alpha ,\beta \right) }}{m+1}+\left( -1\right) ^{n-1}%
\frac{g_{m+1,q}^{\left( \alpha ,\beta \right) }\left( n\right) }{m+1}=\left[
2\right] _{q^{\beta }}\sum_{l=0}^{n-1}\left( -1\right) ^{n-l-1}\left[ l%
\right] _{q^{\alpha }}^{m}
\end{equation*}
\end{theorem}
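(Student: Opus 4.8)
The plan is to obtain the identity as a one-line specialization of Lemma~1. I would apply (\ref{equation 100}) to the test function $f(x)=[x]_{q^{\alpha}}^{m}$, which lies in $UD(\mathbb{Z}_{p})$ for $q$ near $1$, and for which $f_{n}(x)=f(x+n)=[x+n]_{q^{\alpha}}^{m}$. With this choice the right-hand side of Lemma~1 becomes immediately $[2]_{q^{\beta}}\sum_{l=0}^{n-1}(-1)^{n-l-1}[l]_{q^{\alpha}}^{m}$, which is exactly the right-hand side of the asserted formula; so everything reduces to recognizing the two fermionic $p$-adic $q$-integrals on the left of (\ref{equation 100}).

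For the unshifted term I would invoke the Witt type formula of Theorem~2, that is (\ref{equation 104}) with the exponent there taken equal to $m$, to get
\[
I_{-q}^{(\beta)}\!\left(q^{-\beta x}f\right)=\int_{\mathbb{Z}_{p}}q^{-\beta x}[x]_{q^{\alpha}}^{m}\,d\mu_{-q^{\beta}}(x)=\frac{g_{m+1,q}^{(\alpha,\beta)}}{m+1}.
\]
For the shifted term I would use the defining integral representation (\ref{equation 106}) of the modified $q$-Genocchi polynomials with weight $(\alpha,\beta)$, read with index $m$ and with argument equal to the integer $n$, to get
\[
I_{-q}^{(\beta)}\!\left(q^{-\beta x}f_{n}\right)=\int_{\mathbb{Z}_{p}}q^{-\beta x}[x+n]_{q^{\alpha}}^{m}\,d\mu_{-q^{\beta}}(x)=\frac{g_{m+1,q}^{(\alpha,\beta)}(n)}{m+1}.
\]
Substituting both evaluations into (\ref{equation 100}) then produces the claimed relation.

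Once Lemma~1, Theorem~2 and (\ref{equation 106}) are in hand the argument is essentially mechanical, so I do not anticipate a genuine difficulty. The only point that calls for care is the sign bookkeeping: one must keep straight which of the two integrals on the left of (\ref{equation 100}) is the ``constant'' contribution $g_{m+1,q}^{(\alpha,\beta)}/(m+1)$ and which is the ``shifted'' contribution $g_{m+1,q}^{(\alpha,\beta)}(n)/(m+1)$, and then match the factor $(-1)^{n-1}$ together with the summand sign $(-1)^{n-l-1}$ coming out of Lemma~1 against the statement above (taking into account the renaming of the integration variable between (\ref{equation 100}) and (\ref{equation 106})). With that indexing pinned down, the theorem follows.
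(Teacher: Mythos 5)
Your argument is correct and is precisely the paper's own proof (which consists of the single remark ``apply Lemma 1''): take $f(x)=[x]_{q^{\alpha}}^{m}$ in (\ref{equation 100}) and identify the unshifted integral via the Witt formula (\ref{equation 104}) and the shifted one via (\ref{equation 106}), exactly as you do. Concerning the sign bookkeeping you left open: what Lemma 1 actually delivers is $\frac{g_{m+1,q}^{\left( \alpha ,\beta \right) }\left( n\right)}{m+1}+\left( -1\right) ^{n-1}\frac{g_{m+1,q}^{\left( \alpha ,\beta \right) }}{m+1}=\left[ 2\right] _{q^{\beta }}\sum_{l=0}^{n-1}\left( -1\right) ^{n-l-1}\left[ l\right] _{q^{\alpha }}^{m}$, with the shifted (polynomial) term unsigned and the number carrying $\left( -1\right) ^{n-1}$, whereas the printed theorem interchanges the two terms; the two formulations agree only for odd $n$, so for even $n$ the mismatch is a defect in the paper's statement rather than in your derivation, which proves the correct identity.
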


\begin{proof}
By applying Lemma 1 the methodology and techniques used above in getting
some identities for the generating functions of the modified $q$-Genocchi
numbers and polynomials with weight $\left( \alpha ,\beta \right) ,$ we
arrive at the desired result.
\end{proof}

\begin{theorem}
The following identity holds:%
\begin{equation*}
g_{0,q}^{\left( \alpha ,\beta \right) }=0,\text{ and \ }g_{n,q}^{\left(
\alpha ,\beta \right) }\left( 1\right) +g_{n,q}^{\left( \alpha ,\beta
\right) }=\left\{ \QATOP{\left[ 2\right] _{q^{\beta }},\text{if }n=1,}{0,%
\text{ if }n>1.}\right.
\end{equation*}
\end{theorem}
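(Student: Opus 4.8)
The plan is to derive both assertions directly from the Witt-type integral representations established earlier, specialising the difference formula of Lemma 1 to the case $n=1$ and to the monomial test function $f(x)=[x]_{q^\alpha}^{m}$.

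First I would establish $g_{0,q}^{(\alpha,\beta)}=0$. By the definition in (\ref{equation 101}) (equivalently by the generating function in Corollary 1, where the series $C_q^{(\alpha,\beta)}(t)$ carries an explicit factor of $t$), the constant term of $\sum_{n\ge 0} g_{n,q}^{(\alpha,\beta)}\frac{t^n}{n!}$ vanishes; comparing the $t^0$-coefficients gives $g_{0,q}^{(\alpha,\beta)}=0$. Alternatively, one reads this off (\ref{equation 104}) written for ``$n+1=0$'' only formally, so the cleaner argument is the generating-function one.

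Next I would prove the recurrence. Apply Lemma 1 with $n=1$: equation (\ref{equation 100}) becomes
\begin{equation*}
I_{-q}^{(\beta)}\bigl(q^{-\beta x}f_{1}\bigr)+I_{-q}^{(\beta)}\bigl(q^{-\beta x}f\bigr)=[2]_{q^{\beta}}\,f(0).
\end{equation*}
Now choose the test function $f(x)=[x]_{q^{\alpha}}^{m}$, so that $f_1(x)=[x+1]_{q^{\alpha}}^{m}$ and $f(0)=[0]_{q^{\alpha}}^{m}$, which equals $1$ when $m=0$ and $0$ when $m>0$. By the Witt-type formulas (\ref{equation 104}) and (\ref{equation 106}) we have
\begin{equation*}
\int_{\mathbb{Z}_{p}}q^{-\beta t}[t]_{q^{\alpha}}^{m}\,d\mu_{-q^{\beta}}(t)=\frac{g_{m+1,q}^{(\alpha,\beta)}}{m+1},\qquad
\int_{\mathbb{Z}_{p}}q^{-\beta t}[1+t]_{q^{\alpha}}^{m}\,d\mu_{-q^{\beta}}(t)=\frac{g_{m+1,q}^{(\alpha,\beta)}(1)}{m+1}.
\end{equation*}
Substituting these into the specialised Lemma 1 identity yields
\begin{equation*}
\frac{g_{m+1,q}^{(\alpha,\beta)}(1)}{m+1}+\frac{g_{m+1,q}^{(\alpha,\beta)}}{m+1}=[2]_{q^{\beta}}\,[0]_{q^{\alpha}}^{m},
\end{equation*}
and then I would multiply through by $m+1$ and reindex $n=m+1$, noting $[0]_{q^\alpha}^m=\delta_{m,0}$, to obtain exactly $g_{n,q}^{(\alpha,\beta)}(1)+g_{n,q}^{(\alpha,\beta)}=[2]_{q^{\beta}}$ for $n=1$ and $0$ for $n>1$. (One subtlety in the reindexing: when $n=1$ the factor $m+1=1$, so no spurious constant appears; this is why the right-hand side is $[2]_{q^\beta}$ rather than $[2]_{q^\beta}/n$.)

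The main obstacle is purely bookkeeping rather than conceptual: one must be careful that the convention ``replace $g^{n}$ by $g_{n,q}^{(\alpha,\beta)}$'' in a symbolic identity like $g^{n}(1)+g^{n}=\dots$ matches the integral-derived statement, and in particular that the apparent degree shift between $g_{n+1,q}^{(\alpha,\beta)}/(n+1)$ and $g_{n,q}^{(\alpha,\beta)}$ is handled consistently — this is exactly the same normalisation quirk that already appears in the classical Genocchi relation $\frac{G_{n+1}}{n+1}=\int_{\mathbb{Z}_p}x^n\,d\mu_{-1}(x)$ recalled in the introduction. Once the indices are aligned, everything follows from Lemma 1 and Theorems 2–3 with no further computation. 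An alternative route, which I would mention as a remark, is to expand $C_q^{(\alpha,\beta)}(t,1)+C_q^{(\alpha,\beta)}(t,0)$ using (\ref{equation 108}): the sum telescopes to $[2]_{q^\beta}t$, and comparing coefficients of $t^n/n!$ gives the same result.
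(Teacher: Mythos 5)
Your proposal is correct and follows essentially the same route as the paper: the paper applies the identity $I_{-q}^{(\beta)}(q^{-\beta x}f_{1})+I_{-q}^{(\beta)}(q^{-\beta x}f)=[2]_{q^{\beta}}f(0)$ from (\ref{equation 34}) with the exponential test function $f(x)=e^{t[x]_{q^{\alpha}}}$ and compares coefficients of the resulting generating functions, which is exactly the coefficientwise argument you carry out with the monomials $[x]_{q^{\alpha}}^{m}$ and the Witt formulas (\ref{equation 104}), (\ref{equation 106}). Your closing remark about expanding $C_{q}^{(\alpha,\beta)}(t,1)+C_{q}^{(\alpha,\beta)}(t,0)$ is in fact the paper's own proof verbatim, and your careful handling of the index shift $n=m+1$ is sound.
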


\begin{proof}
In (\ref{equation 34}) it is known that%
\begin{equation*}
I_{-q}^{\left( \beta \right) }\left( q^{-\beta x}f_{1}\right)
+I_{-q}^{\left( \beta \right) }\left( q^{-\beta x}f\right) =\left[ 2\right]
_{q^{\beta }}f\left( 0\right)
\end{equation*}%
If we take $f(x)=e^{t\left[ x\right] _{q^{\alpha }}},$ then we have%
\begin{eqnarray}
\left[ 2\right] _{q^{\beta }} &=&\int_{%
\mathbb{Z}
_{p}}q^{-\beta x}e^{t\left[ x+1\right] _{q^{\alpha }}}d\mu _{-q^{\beta
}}\left( x\right) +\int_{%
\mathbb{Z}
_{p}}q^{-\beta x}e^{t\left[ x\right] _{q^{\alpha }}}d\mu _{-q^{-\beta
}}\left( x\right)  \notag \\
&=&\sum_{n=0}^{\infty }\left( g_{n,q}^{\left( \alpha ,\beta \right) }\left(
1\right) +g_{n,q}^{\left( \alpha ,\beta \right) }\right) \frac{t^{n-1}}{n!}
\label{equation 110}
\end{eqnarray}%
Therefore, we get the Proof of Theorem.
\end{proof}

\begin{theorem}
For $d\equiv 1\left( \func{mod}2\right) $, $\alpha ,\beta \in 
\mathbb{Z}
_{+}$ and $n\in 
\mathbb{N}
,$ we get, 
\begin{equation*}
g_{n,q}^{\left( \alpha ,\beta \right) }\left( dx\right) =\frac{\left[ d%
\right] _{q^{\alpha }}^{n-1}}{\left[ d\right] _{-q^{\beta }}}%
\sum_{a=0}^{d-1}\left( -1\right) ^{a}g_{n,q^{d}}^{\left( \alpha ,\beta
\right) }\left( x+\frac{a}{d}\right) .
\end{equation*}
\end{theorem}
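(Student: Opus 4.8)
The plan is to work directly from the Witt-type integral representation \eqref{equation 106}, namely $\frac{g_{n,q}^{(\alpha,\beta)}(dx)}{n} = \int_{\mathbb{Z}_p} q^{-\beta t}[dx+t]_{q^\alpha}^{n-1}\, d\mu_{-q^\beta}(t)$, and to unfold the fermionic $q$-integral over $\mathbb{Z}_p$ into a sum over residues modulo $d$. The key algebraic identity is the factorization $[dx+t]_{q^\alpha} = [d]_{q^\alpha}\,[x + \tfrac{a}{d} + u]_{q^{\alpha d}}$ when $t = a + du$ with $0\le a<d$, which follows from $\frac{1-q^{\alpha(dx+a+du)}}{1-q^\alpha} = \frac{1-q^\alpha d}{1-q^\alpha}\cdot\frac{1-(q^{\alpha d})^{x+a/d+u}}{1-q^{\alpha d}}$. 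This pulls out a factor $[d]_{q^\alpha}^{n-1}$ (matching the exponent $n-1$ coming from the definition with the shift), which is exactly the power appearing in the claimed formula.

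First I would write, for $f \in UD(\mathbb{Z}_p)$, the standard distribution-relation for the fermionic integral: $\int_{\mathbb{Z}_p} f(t)\, d\mu_{-q^\beta}(t) = \frac{1}{[d]_{-q^\beta}}\sum_{a=0}^{d-1}(-1)^a \int_{\mathbb{Z}_p} f(a+du)\, d\mu_{-q^{\beta d}}(u)$, valid precisely because $d$ is odd (so that the sign $(-1)^{a+du}$ splits as $(-1)^a(-1)^{du} = (-1)^a (-1)^u$ and the normalization $[p^N]_{-q^\beta}$ rescales to $[d]_{-q^\beta}[p^{N}]_{-q^{\beta d}}$ in the limit). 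This is the place where the hypothesis $d\equiv 1\pmod 2$ is used, and it is the only subtlety; everything else is bookkeeping. Applying this with $f(t) = q^{-\beta t}[dx+t]_{q^\alpha}^{n-1}$, substituting $t = a+du$, and using the factorization above gives
\begin{equation*}
\frac{g_{n,q}^{(\alpha,\beta)}(dx)}{n} = \frac{[d]_{q^\alpha}^{n-1}}{[d]_{-q^\beta}}\sum_{a=0}^{d-1}(-1)^a \int_{\mathbb{Z}_p} (q^d)^{-\beta u}\,[x+\tfrac{a}{d}+u]_{q^{\alpha d}}^{n-1}\, d\mu_{-q^{\beta d}}(u),
\end{equation*}
after checking that $q^{-\beta(a+du)} = q^{-\beta a}(q^d)^{-\beta u}$ and that the stray factor $q^{-\beta a}$ is harmless — indeed it should be absorbed, so I must be slightly careful here and note that the correct reading of the polynomial definition makes the $q^{-\beta a}$ term cancel against the weight already built into $d\mu_{-q^{\beta d}}$; if it does not cancel cleanly the statement as written is implicitly using the convention $q^{-\beta t}$ attached to the variable being integrated, which after the split attaches to $u$ alone.

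Finally I would recognize the remaining integral as exactly $\frac{g_{n,q^d}^{(\alpha,\beta)}(x+a/d)}{n}$ by comparing with \eqref{equation 106} applied with $q$ replaced by $q^d$ (so $q^\alpha \rightsquigarrow q^{\alpha d}$, $q^\beta \rightsquigarrow q^{\beta d}$, and the shift $x \rightsquigarrow x + a/d$). Cancelling the common factor $n$ from both sides yields
\begin{equation*}
g_{n,q}^{(\alpha,\beta)}(dx) = \frac{[d]_{q^\alpha}^{n-1}}{[d]_{-q^\beta}}\sum_{a=0}^{d-1}(-1)^a\, g_{n,q^d}^{(\alpha,\beta)}\!\left(x+\tfrac{a}{d}\right),
\end{equation*}
which is the assertion. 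The main obstacle, as indicated, is getting the distribution relation for $\mu_{-q^\beta}$ with the correct constant $1/[d]_{-q^\beta}$ and verifying that the oddness of $d$ makes the alternating signs multiply correctly across the residue decomposition; the $q$-number factorization and the final matching against \eqref{equation 106} are routine once the exponent $n-1$ is tracked honestly.
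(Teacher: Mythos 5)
Your proof follows essentially the paper's own route: both start from the Witt-type representation (\ref{equation 106}) and unfold the fermionic integral over $\mathbb{Z}_{p}$ into the $d$ residue classes $t=a+du$, which is precisely the paper's identity (\ref{equation 111}); your factorization $[dx+a+du]_{q^{\alpha }}=[d]_{q^{\alpha }}\left[ x+\tfrac{a}{d}+u\right] _{q^{\alpha d}}$, the splitting $[dp^{N}]_{-q^{\beta }}=[d]_{-q^{\beta }}[p^{N}]_{-q^{\beta d}}$ (this is where $d\equiv 1\ (\mathrm{mod}\ 2)$ and the oddness of $p^{N}$ enter), and the index bookkeeping ($n-1$ in the exponent versus the paper's $n$ with $g_{n+1}$ on the left, followed by $x\mapsto dx$) are all correct.

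The one point you leave unresolved, the ``stray factor $q^{-\beta a}$,'' does resolve, but note that the distribution relation you quoted is not quite right for general $f$: since $\mu _{-q^{\beta }}\left( t+p^{N}\mathbb{Z}_{p}\right) $ carries the weight $q^{\beta t}$, the correct general statement is
\begin{equation*}
\int_{\mathbb{Z}_{p}}f\left( t\right) d\mu _{-q^{\beta }}\left( t\right) =\frac{1}{[d]_{-q^{\beta }}}\sum_{a=0}^{d-1}\left( -1\right) ^{a}q^{\beta a}\int_{\mathbb{Z}_{p}}f\left( a+du\right) d\mu _{-q^{\beta d}}\left( u\right) ,
\end{equation*}
with an extra $q^{\beta a}$ on each residue class. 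When you then insert $f\left( t\right) =q^{-\beta t}\left[ dx+t\right] _{q^{\alpha }}^{n-1}$, the factor $q^{-\beta a}$ coming from $f\left( a+du\right) $ cancels this $q^{\beta a}$ exactly; indeed this cancellation is the whole point of the ``modified'' weight $q^{-\beta t}$ in the definition (in the Riemann sum the integrand's $q^{-\beta t}$ kills the measure's $q^{\beta t}$, so only $\left( -1\right) ^{t}$ survives and no stray factor ever appears). With that one line made explicit, your argument is complete, needs no appeal to a convention, and coincides with the paper's proof, which simply asserts (\ref{equation 111}) and then specializes.
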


\begin{proof}
From (\ref{equation 106}), we can easily derive the following (\ref{equation
111})%
\begin{eqnarray}
\int_{%
\mathbb{Z}
_{p}}q^{-\beta t}\left[ x+t\right] _{q^{\alpha }}^{n}d\mu _{-q^{\beta
}}\left( t\right)  &=&\frac{\left[ d\right] _{q^{\alpha }}^{n}}{\left[ d%
\right] _{-q^{\beta }}}\sum_{a=0}^{d-1}\left( -1\right) ^{a}\int_{%
\mathbb{Z}
_{p}}q^{-\beta t}\left[ \frac{x+a}{d}+t\right] _{q^{d\alpha }}^{n}d\mu
_{\left( -q^{d}\right) ^{\beta }}\left( t\right)   \notag \\
&=&\frac{\left[ d\right] _{q^{\alpha }}^{n}}{\left[ d\right] _{-q^{\beta }}}%
\sum_{a=0}^{d-1}\left( -1\right) ^{a}\frac{g_{n+1,q^{d}}^{\left( \alpha
,\beta \right) }\left( \frac{x+a}{d}\right) }{n+1}.  \label{equation 111}
\end{eqnarray}%
So, by applying expression (\ref{equation 111}), we get at the desired
result and proof is complete.
\end{proof}

\section{Interpolation function of the polynomials $g_{n,q}^{\left( \protect%
\alpha ,\protect\beta \right) }\left( x\right) $}

In this section, we derive the interpolation function of the generating
functions of modified $q$-Genocchi polynomials with weight $\alpha $ and we
give the value of $q$-extension zeta function with weight $\left( \alpha
,\beta \right) $ at negative integers explicitly. For $s\in 
\mathbb{C}
$, by applying the Mellin transformation to (\ref{equation 108}), we obtain%
\begin{eqnarray*}
\xi ^{\left( \alpha ,\beta \right) }\left( s,x\mid q\right) &=&\frac{1}{%
\Gamma \left( s\right) }\int_{0}^{\infty }t^{s-2}\left\{ -C_{q}^{\left(
\alpha ,\beta \right) }\left( -t,x\right) \right\} dt \\
&=&\left[ 2\right] _{q^{\beta }}\sum_{m=0}^{\infty }\left( -1\right) ^{m}%
\frac{1}{\Gamma \left( s\right) }\int_{0}^{\infty }t^{s-1}e^{-t\left[ m+x%
\right] _{q^{\alpha }}}dt
\end{eqnarray*}

where $\Gamma \left( s\right) $ is Euler gamma function. We have%
\begin{equation*}
\xi ^{\left( \alpha ,\beta \right) }\left( s,x\mid q\right) =\left[ 2\right]
_{q^{\beta }}\sum_{m=0}^{\infty }\frac{\left( -1\right) ^{m}}{\left[ m+x%
\right] _{q^{\alpha }}^{s}}
\end{equation*}

So, we define $q$-extension zeta function with weight $\left( \alpha ,\beta
\right) $ as follows:

\begin{definition}
For $s\in 
\mathbb{C}
$ \ and $\alpha ,\beta \in 
\mathbb{N}
,$ we have%
\begin{equation}
\xi ^{\left( \alpha ,\beta \right) }\left( s,x\mid q\right) =\left[ 2\right]
_{q^{\beta }}\sum_{m=0}^{\infty }\frac{\left( -1\right) ^{m}}{\left[ m+x%
\right] _{q^{\alpha }}^{s}}  \label{equation 112}
\end{equation}%
$\xi ^{\left( \alpha ,\beta \right) }\left( s,x\mid q\right) $ can be
continued analytically to an entire function.
\end{definition}

Observe that, if $q\rightarrow 1,$ then $\xi ^{\left( \alpha ,\beta \right)
}\left( s,x\mid 1\right) =\zeta \left( s,x\right) $ which is the Hurwitz-
Euler zeta functions. Relation between $\xi ^{\left( \alpha ,\beta \right)
}\left( s,x\mid q\right) $ \ and $g_{n,q}^{\left( \alpha ,\beta \right)
}\left( x\right) $ are given by the following theorem:

\begin{theorem}
For $\alpha ,\beta \in 
\mathbb{N}
$ and $n\in 
\mathbb{N}
,$ we get 
\begin{equation*}
\xi ^{\left( \alpha ,\beta \right) }\left( -n,x\mid q\right) =\frac{%
g_{n+1,q}^{\left( \alpha ,\beta \right) }\left( x\right) }{n+1}.
\end{equation*}
\end{theorem}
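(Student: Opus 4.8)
The plan is to evaluate the zeta function $\xi^{(\alpha,\beta)}(s,x\mid q)$ at $s=-n$ by exploiting its Mellin-transform representation together with the classical fact that $\frac{1}{\Gamma(s)}$ has simple zeros at the non-positive integers, which kills the divergent part of the integral and leaves only a finite combination of residues. Concretely, I would start from the identity derived just before Definition 3, namely
\begin{equation*}
\xi^{(\alpha,\beta)}(s,x\mid q)=[2]_{q^{\beta}}\sum_{m=0}^{\infty}(-1)^{m}\frac{1}{\Gamma(s)}\int_{0}^{\infty}t^{s-1}e^{-t[m+x]_{q^{\alpha}}}\,dt,
\end{equation*}
and instead of carrying out the $\Gamma$-integral termwise, expand $-C_{q}^{(\alpha,\beta)}(-t,x)$ as its Taylor series in $t$. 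From \eqref{equation 108} we have $C_{q}^{(\alpha,\beta)}(-t,x)=\sum_{k\geq 0}g_{k,q}^{(\alpha,\beta)}(x)\frac{(-t)^{k}}{k!}$, so $-C_{q}^{(\alpha,\beta)}(-t,x)=\sum_{k\geq 0}(-1)^{k+1}g_{k,q}^{(\alpha,\beta)}(x)\frac{t^{k}}{k!}$, and the Mellin integral $\frac{1}{\Gamma(s)}\int_{0}^{\infty}t^{s-2}\{-C_{q}^{(\alpha,\beta)}(-t,x)\}\,dt$ becomes, after term-by-term integration against the local expansion near $t=0$, a sum whose $k$-th term carries a factor $\frac{1}{\Gamma(s)}$ times a simple pole at $s=k-1$.

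Next I would isolate the relevant term: setting $s=-n$ for $n\in\mathbb{N}$, the only surviving contribution comes from $k=n+1$, because $\frac{1}{\Gamma(s)}$ vanishes at $s=-n$ and precisely cancels the simple pole produced by the $t$-integral of the $t^{k-2}=t^{n-1}$ term. Using the standard evaluation $\operatorname{Res}_{s=-n}\Gamma(s)=\frac{(-1)^{n}}{n!}$, equivalently $\lim_{s\to -n}\frac{1}{\Gamma(s)}\cdot\frac{1}{s+n}=(-1)^{n}n!$, the $k=n+1$ term collapses to $(-1)^{n+1}g_{n+1,q}^{(\alpha,\beta)}(x)\frac{1}{(n+1)!}\cdot(-1)^{n}n!\cdot(\text{sign bookkeeping})$, which simplifies to $\frac{g_{n+1,q}^{(\alpha,\beta)}(x)}{n+1}$. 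I would then record that all $k\neq n+1$ terms vanish at $s=-n$ — for $k<n+1$ because the integral $\int_{0}^{1}t^{k-2}(\cdots)$ contributes a pole at $s=k-1\neq -n$ while $\frac{1}{\Gamma(-n)}=0$, and for $k>n+1$ because the corresponding analytic pieces are regular at $s=-n$ and again multiplied by $\frac{1}{\Gamma(-n)}=0$. Hence $\xi^{(\alpha,\beta)}(-n,x\mid q)=\frac{g_{n+1,q}^{(\alpha,\beta)}(x)}{n+1}$.

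The main obstacle is the analytic justification of the interchange of summation (over $m$) and the Mellin integral, and of the term-by-term extraction of residues: one must split $\int_{0}^{\infty}=\int_{0}^{1}+\int_{1}^{\infty}$, note that $\frac{1}{\Gamma(s)}\int_{1}^{\infty}t^{s-2}\{-C_{q}^{(\alpha,\beta)}(-t,x)\}\,dt$ is entire in $s$ (the integrand decays because $-C_{q}^{(\alpha,\beta)}(-t,x)=[2]_{q^{\beta}}t\sum_{m}(-1)^{m}e^{-t[m+x]_{q^{\alpha}}}$ decays exponentially as $t\to\infty$, using $|1-q|_{p}$ or $|q|<1$ to control $[m+x]_{q^{\alpha}}$), so it vanishes at $s=-n$, while on $\int_{0}^{1}$ one substitutes the convergent power series and integrates term by term. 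Since the paper already asserts in Definition 3 that $\xi^{(\alpha,\beta)}(s,x\mid q)$ extends to an entire function, I would lean on that and present the argument in the streamlined form: "the Mellin representation shows $\xi^{(\alpha,\beta)}(s,x\mid q)$ agrees, up to the entire factor $1/\Gamma(s)$, with a meromorphic function whose residues at the non-positive integers are exactly the Taylor coefficients of $-C_{q}^{(\alpha,\beta)}(-t,x)$, and evaluating at $s=-n$ picks out the coefficient of $t^{n+1}$." This mirrors the classical proof that $\zeta(-n,x)=-\frac{B_{n+1}(x)}{n+1}$, so I would close by remarking that the special cases $q\to 1$, $\alpha,\beta\to 1$ recover the known Hurwitz–Euler zeta evaluation, which serves as a consistency check.
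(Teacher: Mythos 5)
Your argument is correct, but it is a genuinely different route from the paper's. The paper's proof of this theorem is a one-line substitution: setting $s=-n$ in the series (\ref{equation 112}) turns it into $\left[ 2\right] _{q^{\beta }}\sum_{m\geq 0}\left( -1\right) ^{m}\left[ m+x\right] _{q^{\alpha }}^{n}$, which is exactly the Abel-type summation identity (\ref{equation 107}), and hence equals $\frac{g_{n+1,q}^{\left( \alpha ,\beta \right) }\left( x\right) }{n+1}$; the analytic continuation itself is simply asserted in Definition 3. You instead run the classical Hurwitz-zeta continuation through the Mellin representation: split $\int_{0}^{\infty }=\int_{0}^{1}+\int_{1}^{\infty }$, observe the tail is entire and killed by $1/\Gamma \left( -n\right) =0$, expand $-C_{q}^{\left( \alpha ,\beta \right) }\left( -t,x\right) $ via (\ref{equation 108}) on $\left[ 0,1\right] $ (note $g_{0,q}^{\left( \alpha ,\beta \right) }\left( x\right) =0$, so the lowest power is $t^{s-1}$), and use the zeros of $1/\Gamma $ to isolate the $k=n+1$ term; your sign bookkeeping does close correctly, since $\left( -1\right) ^{n+2}\frac{g_{n+1,q}^{\left( \alpha ,\beta \right) }\left( x\right) }{\left( n+1\right) !}\cdot \left( -1\right) ^{n}n!=\frac{g_{n+1,q}^{\left( \alpha ,\beta \right) }\left( x\right) }{n+1}$. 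What each approach buys: the paper's proof is immediate but rests on the unproved claim of entire continuation and on reading the (classically divergent) series (\ref{equation 112}) at $s=-n$ through (\ref{equation 107}); your proof actually supplies the continuation and therefore justifies the substitution, at the price of the convergence and interchange issues you flag. Two small corrections to your write-up: the Mellin integral lives over the complex numbers, so the relevant hypothesis is $q\in \mathbb{C}$ with $\left\vert q\right\vert <1$ (the $p$-adic bound $\left\vert 1-q\right\vert _{p}$ plays no role here); and since $\left[ m+x\right] _{q^{\alpha }}\rightarrow \frac{1}{1-q^{\alpha }}\neq 0$ as $m\rightarrow \infty $, the sum over $m$ inside $C_{q}^{\left( \alpha ,\beta \right) }$ is not absolutely convergent, so the decay of the integrand as $t\rightarrow \infty $ and the interchange over $m$ are best handled through the closed finite-sum form in the first line of (\ref{equation 107}) (or an Abel-regularization), exactly the same device the paper implicitly relies on.
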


\begin{proof}
By substituting $s=-n$ into (\ref{equation 112}), we arrive at the desired
result.
\end{proof}

\end{document}